\documentclass[a4paper,12pt, reqno]{amsart}
\usepackage{amssymb,amsthm,amsmath}
\usepackage{cite}

\pagestyle{myheadings}

\baselineskip=0pt
\baselineskip=0pt

\setlength{\topmargin}{0mm}
\setlength{\headheight}{0mm}
\setlength{\headsep}{10mm}
\setlength{\textheight}{238mm}
\setlength{\footskip}{15mm}
\setlength{\marginparpush}{20pt}

\setlength{\oddsidemargin}{5mm}\setlength{\evensidemargin}{0mm}
\setlength{\textwidth}{160mm}
\setlength{\marginparsep}{0mm}
\setlength{\marginparwidth}{20mm}
\setlength{\parsep}{20em}

\usepackage{amsmath,amsthm,amscd,amssymb}
\usepackage{latexsym}
\usepackage[colorlinks,citecolor=red,pagebackref,hypertexnames=false]{hyperref}
 
\numberwithin{equation}{section}
\allowdisplaybreaks[2]
\theoremstyle{plain}
\newtheorem{theorem}{Theorem}[section]

\theoremstyle{definition}

\theoremstyle{remark}
\newtheorem{remark}[theorem]{Remark}

\newtheorem{case[theorem]}{Case}

\def\norm#1.#2.{\lVert#1\rVert_{#2}}

\title[A note on Singular integral]{A note on Singular integral
}

\author{Arup Maity } \address{Arup Maity \endgraf School of Mathematics,	\endgraf Harish-Chandra Research Institute, 	\endgraf Allahabad, 211 019, India} \email{ arupkumarmaity@hri.res.in}

\author{Shyam Swarup Mondal } \address{Shyam Swarup Mondal  \endgraf Department of Mathematics	\endgraf Indian Institute of Technology Guwahati,	\endgraf Guwahati-781039, Assam, India.} \email{mondalshyam055@gmail.com}

\keywords{ $L^p$-estimate;  Convolution;   Singular integral; Interpolation, Multiplier}

\subjclass[2010]{Primary  42B20, 42B15; Secondary  44A35, 46F12}

\date{\today}
\begin{document}
\maketitle
\begin{abstract} 
In this paper, we prove  that for $\frac{n}{2}+\frac{1}{4}<\alpha \leq\frac{n+1}{2} $, the convolution operator    
$$S_{\alpha} f(x)=\int_{|y| \geq  1} f(x-y)\left(|y|^{2}-1\right)^{-\alpha} d y$$
is  bounded from $L^p$ to $L^q$ for certain values of $p$ and $q$.  
\end{abstract} 
\section{Introduction}
In this paper, we consider the $L^p-L^q$ estimates for the convolution operators having  kernel  with singularity along the sphere and at infinity which is non zero outside the neighbourhood of zero.

 For convolutions on Euclidean space with homogeneous kernels, the fractional integration theorem and the Calderón-Zygmund inequalities give essentially the best possible results \cite{hor}.  Such  kind of kernels have singularities at the point $0$ and $\infty$. It was Strichartz who first proved that other than that kind of singularity,  the corresponding operator is still bounded from $L^p$ to other $L^q$.  Such kind of operators  naturally arise in the study of the wave equation and   gives   new information about solutions of this type of equation.

 For $0<\alpha<1$,   Strichartz considered the operator
 $$T_{\alpha} f(x)=\int_{|y| \leq   1} f(x-y)\phi_{\alpha}(y)\;d y,$$
  where the  kernel is defined by 
\begin{align}\label{ker}
\phi_{\alpha}(y)=\begin{cases}
	(1-|y|^2)^{-\alpha},& \text{if}\; |y|<1,\\
	0,&   \text{if}\; |y|\geq 1.
\end{cases}	
\end{align}
 The author investigated  $L^p-L^q$ boundedness of such convolution operator with kernel  having  singularities on the sphere and at infinity.  In general, from \cite{parker13}, if we consider  the   family of generalized functions $(ax^2+bx+c)_+^z$,  where 
 
 $$(ax^2+bx+c)_+^z=\begin{cases}
 	(ax^2+bx+c)^z,& ~\text{ if  }  \quad ax^2+bx+c>0,\\
 	0,& ~\text{ if  } \quad ax^2+bx+c\leq0,
 \end{cases} $$
then it is always possible to perform a linear transformation on $x$ such that 
the  generalized function $(ax^2+bx+c)_+^z$ is transformed to one of the   forms   $\left(1-x^{2}\right)_+^\lambda$, $\left(1+x^{2}\right)_+^\lambda$, $\left(x^{2}-1\right)_+^\lambda$, or  $x_+^{2\lambda}$ respectively, to the four cases   in which $a x^{2}+b x+c>0$.  Here in this paper, we mainly interested in the case $\left(x^{2}-1\right)_+^\lambda$.  Note that, all other cases  have becn treated already. In particular, Strichartz in \cite{hopkins2} cosider the first case $\left(1-x^{2}\right)_+^\lambda$ and  proved the following  result.
 \begin{theorem}\cite{hopkins2}
 	\begin{enumerate}
 		\item  $\left\|T_{\alpha} f\right\|_{q} \leq A_{\alpha}\|f\|_{p}$ provided $1<p \leq 2 \leq q<\infty  $ and $ \frac{1}{p}-\frac{1}{q}  \leq\frac{(n+1-2 \alpha)}{2n} $ for $0<\alpha \leq\frac{(n+1)}{2}$.\\
 		\item $\left\|T_{\alpha} f\right\|_{p^{\prime}} \leq A_{\alpha}\|f\|_{p}$ for $p=\frac{(n+1)}{(n+1-\alpha)}, p^{\prime}=\frac{(n+1)}{\alpha}$ and $0<\alpha \leq \frac{(n+1)} {2}$.\\
 		\item $\left\|T_{\alpha} f\right\|_{q} \leq A_{\alpha}\|f\|_{p}$ for $\frac{1}{2} \leq \alpha \leq\frac{(n+1)} {2}$ provided $\frac{(n+1)} {(n+1-\alpha)} \leq p \leq 2$ and $\frac{n}{q}=\alpha-\frac{1}{p}$, or $\frac{n}{\left(n+\frac{1}{2}-\alpha\right)} \leq p \leq\frac{(n+1)} {(n+1-\alpha)}$ and $\frac{1}{q}=\alpha-\frac{n}{p^{\prime} }.$
 	\end{enumerate}
 \end{theorem}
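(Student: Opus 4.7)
The plan is to pass to the Fourier side and invoke Stein's analytic interpolation theorem on a suitably chosen complex family of convolution operators, combined with Plancherel and Young-type estimates.

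First, I would compute the Fourier transform of $\phi_\alpha$ via the Bochner formula for radial functions together with meromorphic continuation in the parameter, obtaining
$$\widehat{\phi_\alpha}(\xi) \;=\; c_\alpha \, |\xi|^{\alpha - n/2}\, J_{n/2-\alpha}(2\pi|\xi|),$$
where $c_\alpha$ involves $\Gamma(1-\alpha)$. The standard large-argument asymptotic $J_\nu(t) \sim \sqrt{2/(\pi t)}\cos(t - \nu\pi/2 - \pi/4)$ then yields the pointwise bound $|\widehat{\phi_\alpha}(\xi)| \leq C_\alpha (1+|\xi|)^{\alpha - (n+1)/2}$, which is uniformly bounded precisely when $\alpha \leq (n+1)/2$. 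Plancherel's theorem immediately gives the $L^2 \to L^2$ boundedness of $T_\alpha$ throughout this range, furnishing the corner $(1/p, 1/q) = (1/2, 1/2)$ of the Riesz diagram.

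For the endpoint claim (2), I would embed $\phi_\alpha$ into the analytic family $\Phi_z(y) := (1-|y|^2)_+^{-z}/\Gamma(1-z)$, which extends entirely in $z$ as a family of tempered distributions. On the line $\operatorname{Re}(z)=0$ the kernel is bounded and compactly supported, so convolution against $\Phi_z$ maps $L^1$ to $L^\infty$ with at most exponential-type growth in $\operatorname{Im}(z)$, admissible for Stein interpolation. On the critical line $\operatorname{Re}(z)=(n+1)/2$, the cancellation between the zeros of $1/\Gamma(1-z)$ and the oscillatory decay of $J_{n/2-z}$ produces a multiplier bounded in $\xi$, uniformly in $\operatorname{Im}(z)$ with admissible growth; Plancherel then delivers the $L^2 \to L^2$ estimate. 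Applying Stein's analytic interpolation at $z = \alpha$ with parameter $\theta = 2\alpha/(n+1)$ yields $1/p = (n+1-\alpha)/(n+1)$ and $1/q = \alpha/(n+1)$, which is exactly the endpoint of (2).

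The triangle in part (1) and the segments in part (3) would then follow from Riesz--Thorin interpolation, combining the three ingredients above: the $L^2\to L^2$ bound from Step 1, the symmetric endpoint $L^{p_0}\to L^{p_0'}$ from Step 2, and the elementary Young-type bounds coming from $\phi_\alpha \in L^r$ for all $r < 1/\alpha$. The self-adjointness $T_\alpha^* = T_\alpha$ (since $\phi_\alpha$ is real and radial) then reflects the estimates across the line $1/p+1/q = 1$, filling the claimed region.

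The main technical obstacle is the verification on the critical line $\operatorname{Re}(z)=(n+1)/2$: one must show that $\widehat{\Phi_z}$ is bounded uniformly in $\xi$ with admissible growth in $\operatorname{Im}(z)$. This forces a careful analysis of Bessel functions of complex order together with Stirling-type control on $\Gamma(1-z)$ in vertical strips, so that the oscillatory decay of $J$ exactly compensates the growth of $1/\Gamma$ along the imaginary axis. All the remaining steps are either Plancherel, Young, or a bookkeeping interpolation in the Riesz diagram.
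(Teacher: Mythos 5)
This theorem is quoted from Strichartz \cite{hopkins2} and the paper gives no proof of it, so the natural benchmark is Strichartz's original argument, which is also the template for the paper's own Theorem \ref{Main}. Your Fourier-side setup is correct and matches that argument: the Bochner formula gives $\widehat{\phi_\alpha}(\xi)=c_\alpha|\xi|^{\alpha-n/2}J_{n/2-\alpha}(|\xi|)$, the Bessel asymptotics give $|\widehat{\phi_\alpha}(\xi)|\le C_\alpha(1+|\xi|)^{\alpha-(n+1)/2}$, hence $L^2\to L^2$ for $\alpha\le\frac{n+1}{2}$; and your treatment of part (2) via Stein interpolation of the family $(1-|y|^2)_+^{-z}/\Gamma(1-z)$ between $\operatorname{Re}z=0$ ($L^1\to L^\infty$) and $\operatorname{Re}z=\frac{n+1}{2}$ ($L^2\to L^2$) is exactly the intended proof, with the correct exponents.

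The gap is in part (1), and it propagates into part (3). The region of part (1) is the triangle in the $(1/p,1/q)$ square with vertices $(\frac12,\frac12)$, $(\frac12+\frac{n+1-2\alpha}{2n},\frac12)$ and $(\frac12,\frac12-\frac{n+1-2\alpha}{2n})$; by Riesz--Thorin convexity you must produce the two off-diagonal vertices, i.e. the estimates $L^p\to L^2$ and $L^2\to L^q$ with $\frac1p-\frac1q=\frac{n+1-2\alpha}{2n}$. None of your three ingredients yields these. The Plancherel point and the Stein endpoint both lie on the antidiagonal $\frac1p+\frac1q=1$, so their convex hull (even after the self-adjointness reflection, which fixes the antidiagonal) is a one-dimensional segment that misses the vertices. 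The Young bounds give only the band $\frac1p-\frac1q<1-\alpha$, which is vacuous for $\alpha\ge1$ (there $\phi_\alpha\notin L^r$ for any $r\ge1$) and strictly narrower than $\frac{n+1-2\alpha}{2n}$ whenever $n\ge2$ and $\alpha>\frac12$ --- precisely the range $\frac12\le\alpha\le\frac{n+1}{2}$ needed for part (3). Concretely, for $n=2$, $\alpha=1$ your ingredients are $L^2\to L^2$ and $L^{3/2}\to L^3$ only, which do not imply the claimed $L^{4/3}\to L^2$ bound. The missing tool is the Hardy--Littlewood--H\"ormander multiplier theorem (Theorem \ref{multipliers} of this paper, Theorem 1.11 of \cite{hor}): the decay $|m(\xi)|\le C|\xi|^{-a}$ with $a=\frac{n+1}{2}-\alpha$ gives $L^p\to L^q$ on the whole line $\frac1p-\frac1q=\frac{a}{n}$ inside $1<p\le2\le q<\infty$, not merely on the antidiagonal; interpolating that line with the $L^2\to L^2$ point fills the triangle of part (1), and interpolating the resulting vertices with the Stein endpoint of part (2) (plus duality) produces the two segments of part (3). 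Without this multiplier theorem (or an equivalent Pitt/Hardy--Littlewood inequality $\||\xi|^{-b}\hat f\|_2\lesssim\|f\|_p$), the claimed regions cannot be reached from your endpoints.
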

Considerable attention has been devoted to discovering generalizations to new contexts for the boundedness of convolution operator by several researchers in different contexts.  For example,  $L^p-L^q$  boundedness of convolution operator was studied  by  Amri  and  Gaidi  in \cite{gai}  in the Dunkel setting and  investigated  the solutions of wave equations associated to Dunkl Laplacian operator. Karapetyants studied  $L^p-L^q$ boundedness for convolutions operator  with  
kernels having singularities on a sphere in \cite{alex}.  F. Ricci   and T. Giancarlo in \cite{ricci} studied   boundedness of  convolution operators defined by singular measures.   Further, $\mathcal{L}$-characteristic of some  potential type  convolution operator $A^{\alpha}$ is bounded from $L^{p}$ into $L^{q}$ was  given in the works of  Karapetyants and  Nogin \cite{kara}.   There are several papers in the direction  of bounds for    potential-type convolution operators and close to them, we refer to  %operators from $L^p-L^q$, from  $H^p$ to $H^q$, from $H^p$ to $BMO$ with different symbols  and many others
 \cite{1,2,3,4,5, gil} and references therein.    Such operators naturally arise in applications to the theory of fractional powers of differential operators, in particular, classical operators of mathematical physics: the wave operator, the Klein-Gordon and Schrödinger operators, the telegraph operator.

 %$L^1 - H^1$ and $BMO-L^\infty$ bounds for the generalized Strichartz potential has been studied by  Gil and Nogin in  \cite{gil}.  

%  Such operators naturally arise in applications to the theory of fractional powers of differential operators, in particular, classical operators of mathematical physics: the wave operator, the Klein-Gordon and Schrödinger operators, the telegraph operator.

% Also,  Kaur and Thangavelu r \cite{kaur} studied  $L^p$ estimates for the wave equation associated to the Grushin operator.  \textbf{Need to add more application of work in this direction}.
 
 The main aim of this paper is to prove  $L^p$ to $L^q$ bounded of the   convolution operator    
 \begin{align}\label{convolution}
 S_{\alpha} f(x)=\int_{|y| \geq  1} f(x-y)\xi_{\alpha}(y)\; d y
 \end{align}
  for $\alpha>0$, where the kernel is defined by 
\begin{align}\label{ker1}
	\xi_{\alpha}(y)=\begin{cases}
		(|y|^2-1)^{-\alpha},& \text{if}\; |y|>1,\\
0,&  \text{if}\; |y|\leq1,
\end{cases}
\end{align} 
  for certain $p$ and $q$.   The  kernel defined in (\ref{ker1}) is different form the kernel  (\ref{ker}), considered by Strichartz. The main difference  in the kernel defined in   (\ref{ker1}) is that this  kernel has singularity along the sphere and in infinity.  Also it is non zero outside the neighbourhood of zero which is not consider by any author in past and  which makes the problem rather slightly different. However,  in the sense of distribution, the range of $\alpha$ can be extended similarly as in Equation (3) of  \cite{hopkins2} by  integration by parts.

  Our main resuls in this article  is the following $L^p-L^q$ boundedness theorem of convolution operator $S_\alpha$.
  \begin{theorem}\label{Main}
  	Let $\frac{n}{2}+\frac{1}{4}<\alpha \leq\frac{n+1}{2} $.  Then  for $1<p\leq 2\leq q<\infty$ and $$\frac{1}{p}-\frac{1}{q} \leq  \frac{2\alpha-n-\frac{1}{2}}{n},$$ 
  	the   convolution opeartor $$S_{\alpha}f=\xi_{\alpha}*f,\quad f \in \mathcal{S}(\mathbb{R}^n)$$
  	 is bounded from $L^p$ to $L^q$.
  \end{theorem}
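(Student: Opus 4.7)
The plan is to derive the $L^p\to L^q$ boundedness of $S_\alpha$ from a sharp pointwise estimate on its Fourier multiplier $m_\alpha:=\widehat{\xi_\alpha}$, combined with an analytic interpolation argument in the spirit of Strichartz \cite{hopkins2}. Since $\xi_\alpha$ is radial and well-defined as a tempered distribution for all $\alpha>0$ (via the integration-by-parts procedure indicated by the authors), its Fourier transform admits the Hankel representation
$$m_\alpha(\xi)\;=\;(2\pi)^{n/2}\,|\xi|^{1-n/2}\int_{1}^{\infty}(r^{2}-1)^{-\alpha}\,J_{n/2-1}(r|\xi|)\,r^{n/2}\,dr.$$
The first step would be to evaluate this integral in closed form via the substitution $r=\cosh t$ together with the classical Gelfand--Shilov-type formulas for Hankel transforms of $(r^{2}-1)_{+}^{\lambda}$, obtaining
$$m_\alpha(\xi)\;=\;C_{n,\alpha}\,|\xi|^{\alpha-n/2}\,\Phi_{n/2-\alpha}(|\xi|),$$
where $\Phi_\nu$ is a specific Bessel-type function (a combination of $J_\nu$ and $Y_\nu$, or a Hankel function) whose asymptotic behaviour at $0$ and $\infty$ is tabulated.

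The key analytic step is then to extract the pointwise estimate
$$|m_\alpha(\xi)|\;\lesssim\;(1+|\xi|)^{\alpha-(n+1)/2}.$$
At high frequency this uses the asymptotic $\Phi_\nu(r)=O(r^{-1/2})$ (the source of the Stein--Tomas-like spherical decay), while at low frequency it uses the small-argument behaviour $\Phi_\nu(r)=O(r^{-|\nu|})$, which cancels the prefactor $|\xi|^{\alpha-n/2}$ as soon as $\alpha>n/2$. Plancherel then immediately yields the $L^2\to L^2$ endpoint. To obtain the full off-diagonal region, I would embed $S_\alpha$ into the Stein-admissible analytic family
$$\xi_z(y)\;=\;\frac{(|y|^{2}-1)_{+}^{-z}}{\Gamma((n+1)/2-z)},\qquad z\in\C,$$
and apply Stein's complex interpolation theorem between the $L^2\to L^2$ estimate on the line $\mathrm{Re}\,z=(n+1)/2$ (obtained from the bound above) and an off-diagonal endpoint estimate on a second vertical line $\mathrm{Re}\,z=\sigma_0$, the latter accessible via Young's inequality combined with the improved decay of $\widehat{\xi_z}$ on that line. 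Linear interpolation of the $(1/p,1/q)$ exponents then produces the sharp line $\tfrac{1}{p}-\tfrac{1}{q}=\tfrac{2\alpha-n-1/2}{n}$, and the threshold $\alpha>n/2+1/4$ is exactly the condition for the resulting interpolation region inside $\{1<p\le 2\le q<\infty\}$ to be non-degenerate.

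The hardest step is the pointwise estimate on $m_\alpha$. Unlike Strichartz's kernel (\ref{ker}), whose distributional Fourier transform is entire in $\alpha$ and controlled by a single Bessel expression, the kernel (\ref{ker1}) carries simultaneously a $(|y|-1)^{-\alpha}$ singularity along the unit sphere and a non-integrable $|y|^{-2\alpha}$ tail at infinity, both of which contribute to $m_\alpha(\xi)$ at once. One must therefore combine a stationary-phase / van der Corput analysis of the oscillatory integral near $r=1$ (which produces the Stein--Tomas-like $|\xi|^{-(n-1)/2}$ factor) with a careful analytic continuation controlling the infinity tail, and verify that the resulting Bessel formula agrees with the distributional definition of $\xi_\alpha$ throughout the range $n/2+1/4<\alpha\le (n+1)/2$. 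Once this pointwise bound is in hand, the interpolation argument in the previous paragraph becomes essentially routine bookkeeping of exponents.
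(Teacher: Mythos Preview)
Your opening steps---writing $\widehat{\xi_\alpha}$ via a Hankel/Bessel formula and extracting a pointwise decay estimate on the multiplier---coincide with what the paper does. From that point on, however, the paper takes a much shorter route: it feeds the pointwise bound $|m(s)|\le c\,|s|^{-(2\alpha-n-1/2)}$ directly into H\"ormander's Hardy--Littlewood multiplier theorem (Theorem~\ref{multipliers}), which immediately gives $L^p\to L^q$ boundedness for $1<p\le 2\le q<\infty$ with $\tfrac1p-\tfrac1q\le \tfrac{2\alpha-n-1/2}{n}$. No analytic family and no Stein interpolation enter the proof of Theorem~\ref{Main}; the paper reserves Stein interpolation for the separate one-dimensional result, Theorem~\ref{main1}.

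Your proposed complex-interpolation scheme, by contrast, has two concrete gaps. First, the ``off-diagonal endpoint'' on the second line $\mathrm{Re}\,z=\sigma_0$ is not actually available via Young's inequality: for $n\ge 2$ the kernel $\xi_z$ lies in no $L^r(\R^n)$ whatsoever, because the local integrability condition at the sphere ($r\,\mathrm{Re}\,z<1$) and the decay condition at infinity ($2r\,\mathrm{Re}\,z>n$) are mutually incompatible. Second, the exponent bookkeeping is inverted: anchoring the $L^2\to L^2$ bound at $\mathrm{Re}\,z=(n+1)/2$ forces $\tfrac1p-\tfrac1q=0$ there, whereas the target quantity $\tfrac{2\alpha-n-1/2}{n}$ equals $\tfrac{1}{2n}>0$ at $\alpha=(n+1)/2$ and vanishes instead at $\alpha=\tfrac n2+\tfrac14$, so no linear interpolation between your two vertical lines reproduces the stated relation. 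The remedy is simply to discard the interpolation step and apply Theorem~\ref{multipliers} directly to your multiplier bound, as the paper does.
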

 The proof of the theorem is given in Section \ref{sec2}.  The main ingridient  we use is the the Hardy-Littlewood multiplier theorem given in Theorem 1.11 of   \cite{hor},  to prove Theorem \ref{Main} using  a similar technique  given in   \cite{hopkins2}.

The presentation of this manuscript is divided into three sections including the introduction. In section \ref{sec3}, we recall some basics properties and results from eucledian  Fourier analysis which will be used in order to prove our main results. In Section \ref{sec2},  we prove our main reults.

 \section{Preliminaries}\label{sec3}
In this section,  we first recall some notation and basic properties of   Fourier
analysis    on $\mathbb{R}^n$. %We refer \cite{ 0} for more details and the study of various operator theoretical and mapping properties of these objects.
The  Fourier transform $\hat{f}$ of a function $f \in L^{1}\left(\mathbb{R}^{n}\right)$ is defined by
$$
\mathcal{F}f(\xi)=\widehat{f}(\xi)=\int_ {\mathbb{R}^{n}} f(x) e^{i x \cdot \xi} d x, \quad \xi \in \mathbb{R}^{n}.
$$
The   Fourier transform can be extended to $L^{2}\left(\mathbb{R}^{n}\right)$ using the standard density arguments. %We normalize the Haar measures on $\mathbb{R}^{n}$  in such a manner so that the following Plancherel formula  holds: $$ \int_{\mathbb{R}^{n}} |f(x)|^{2}dx=\int_{\mathbb{R}^{n}}|\widehat{f}(\xi)|^{2} d \xi $$
The inverse  Fourier transform is given by
$$
f(x)=\int_{\mathbb{R}^{n}} \hat{f}(\xi) e^{-i x \cdot \xi} d \xi, \quad x \in \mathbb{R}^{n},
$$ where $f$ belongs to a suitable function space. The space  $  S\left(\mathbb{R}^{n}\right)$ is the Schwartz class of rapidly decreasing smooth functions and $S^{\prime}\left(\mathbb{R}^{n}\right)$ is the space of tempered distributions. Let $T\in S^{\prime}\left(\mathbb{R}^{n}\right)$, its fourier transform $\hat{T}$ defined in the sense of distribution by
$$\widehat{T}(\phi)=T(\hat{\phi}), \quad \phi \in S(\mathbb{R}^n).$$
Moreover, $\widehat{T}$ is called multiplier if it is $L^p-L^q$ bounded.

%$C^{\infty} \equiv C^{\infty}\left(\mathbb{R}^{n}\right)$ is the class of infinitely differentiable functions; $C_{0}^{\infty}$ is the subclass of $C^{\infty}$ consisting of the functions with compact support; 
%the space $\Phi \equiv \Phi\left(\mathbb{R}^{n}\right)$ is defined to consist of the Schwartz functions $\varphi$ such that the Fourier transform $\widehat{\varphi}$ and all its derivatives vanish at the origin; the dual space $\Psi \equiv \Psi\left(\mathbb{R}^{n}\right)=F \Phi$ is equipped with the countable set of norms
% $$
% \|\psi\|_{N}=\sup _{x \in \mathbb{R}^{n} \backslash 0 \atop|k| \leq N}\left[\max \left\{\sqrt{1+|x|^{2}}, 1 /|x|\right\}\right]^{N}\left|\mathcal{D}^{k} \psi(x)\right|
% $$
% where
% $$
% \mathcal{D}^{k}=\frac{\partial^{|k|}}{\partial x_{1}^{k_{1}} \ldots \partial x_{n}^{k_{n}}}, \quad k=\left(k_{1}, \ldots, k_{n}\right),|k|=k_{1}+\ldots+k_{n}
% $$
% Thus the space $\Phi$ can be equipped with the dual topology. The spaces $\Phi, \Psi$ were introduced by P. I. Lizorkin (see $[4],[5])$ and one may find information about them also in $[10]$, [11]. The symbols $\Phi^{\prime}, \Psi^{\prime}$ stand for the spaces of distributions on $\Phi, \Psi$ respectively. We recall that $\Phi$ is dense in $L_{p}, 1<p<\infty$ (see the above-mentioned references). 
As usual, the space $L_{p}^{q}$ denotes the Banach space of $S^{\prime}(\mathbb{R}^n)$-distributions $T$ such that the closure in $L^{p}$ of the convolution $T \ast u$ for  $u \in S(\mathbb{R}^n)$, is an $L^{p} -L^{q}$ bounded translation invariant operator. The spcae $L_{p}{ }^{q}$ is thus isomorphic to a closed subspace of the Banach space of all bounded linear mappings of $L^{p}$ into $L^{q}$ and  hence is also a Banach space. Also,  $M_{p}^{q}=\mathcal{F} L_{p}^{q}$ stands for the space of  all $L^{p}-L^{q}$ multipliers  type $(p, q)$ \cite{hor}. Moreover, we have
$$
L_{p}^{\infty}=L_{1}^{p^{\prime}}=L^{p^{\prime}}, p<\infty~\text{and}~ L_{\infty}^{\infty}=L_{1}^{1}=M.
$$
 From \cite{hor}, we have the following Hardy-Littlewood multiplier theorem. 
\begin{theorem}\cite{hor}\label{multipliers}
	 Let $0<a<n$ and $m$ be a measurable function such that  
	$$
	|m(\xi)| \leq \frac{c}{|\xi|^{a}}
	$$
	some constant $c>0$. 	Then the operator $T_{m}=\mathcal{F}^{-1}\left(m \mathcal{F} \right)$ is bounded from $L^{p}\left(\mathbb{R}^{n} \right)$ to $L^{q}\left(\mathbb{R}^{n} \right)$, provided
	$$
	1<p \leq 2 \leq q<\infty ~~\text{and}~~\frac{1}{p}-\frac{1}{q}=\frac{a}{n} .
	$$
\end{theorem}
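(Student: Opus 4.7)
The plan is to prove the full $L^p$-$L^q$ bound on the entire admissible segment by chaining three ingredients in Lorentz spaces: Hunt's Hölder inequality, the Paley sharpening of Hausdorff-Young, and a Lorentz-space embedding. This would bypass the usual delicacy of interpolating between endpoints separately.

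First I would rewrite the hypothesis $|m(\xi)| \leq c|\xi|^{-a}$ as membership in weak-$L^{n/a}$: $m \in L^{n/a,\infty}(\R^n)$ with $\|m\|_{n/a,\infty} \leq c'$, since the distribution function of $m$ at level $t$ is bounded by the measure of $\{|\xi| \leq (c/t)^{1/a}\}$, which is $\lesssim t^{-n/a}$. Then, applying Hunt's Hölder inequality in Lorentz spaces, I would obtain
\begin{equation*}
\|m \widehat f\|_{L^{q',p}} \leq C \|m\|_{L^{n/a,\infty}} \|\widehat f\|_{L^{p',p}},
\end{equation*}
where the reciprocals satisfy $1/q' = a/n + 1/p'$, which is exactly the admissibility condition $1/p - 1/q = a/n$.

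Next, I would invoke Paley's sharpening of Hausdorff-Young (obtainable by real interpolation between the trivial $L^1 \to L^\infty$ bound and Plancherel) to get $\|\widehat f\|_{L^{p',p}} \leq C \|f\|_p$ for $1 < p \leq 2$. Dually, $\mathcal{F}^{-1}\colon L^{q',q} \to L^q$ is bounded for $2 \leq q < \infty$, and the Lorentz embedding $L^{q',p} \hookrightarrow L^{q',q}$, valid since $p \leq q$, would give $\|\mathcal{F}^{-1} g\|_q \leq C \|g\|_{L^{q',p}}$. Applying this with $g = m \widehat f$ and chaining the three estimates would yield
\begin{equation*}
\|T_m f\|_q = \|\mathcal{F}^{-1}(m \widehat f)\|_q \leq C \|m\|_{L^{n/a,\infty}} \|f\|_p,
\end{equation*}
which is the desired bound, simultaneously for every admissible pair $(p,q)$.

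The main obstacle is invoking the Lorentz-space refinements of classical Hölder and Hausdorff-Young: Hunt's Hölder requires careful bookkeeping with decreasing rearrangements, while Paley's sharpening rests on the real method of interpolation. Both are classical and can be cited from standard references. An alternative route closer to Hörmander's original argument would be to dyadically decompose $m = \sum_j m_j$ with $m_j$ supported in the shell $\{2^{j-1} \leq |\xi| \leq 2^{j+1}\}$, estimate each piece $T_{m_j}$ via Plancherel ($\|T_{m_j}\|_{2 \to 2} \leq c\, 2^{-ja}$) and Young ($\|T_{m_j}\|_{1 \to \infty} \leq \|m_j\|_1 \leq c\, 2^{j(n-a)}$), interpolate via Riesz-Thorin to obtain uniform-in-$j$ bounds at the self-dual pair $p_0 = 2n/(n+a)$, $q_0 = p_0'$, and then sum using almost-orthogonality of the Fourier supports, filling in the rest of the segment by Riesz-Thorin together with the duality $T_m^\ast = T_{\overline m}$.
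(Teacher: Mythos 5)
The paper does not prove this statement at all --- it is quoted verbatim as Theorem 1.11 of H\"ormander's 1960 Acta paper --- so the only meaningful comparison is with the standard proof in the cited source. Your primary argument is correct and is in fact essentially H\"ormander's own route: recast the pointwise bound as $m\in L^{n/a,\infty}$, apply the O'Neil--Hunt H\"older inequality to get $m\widehat f\in L^{q',p}$ with $1/q'=1/p'+a/n$, and sandwich this between the Paley--Hausdorff--Young inequality $\mathcal F\colon L^p\to L^{p',p}$ and its dual $\mathcal F^{-1}\colon L^{q',q}\to L^q$, using the monotonicity $L^{q',p}\hookrightarrow L^{q',q}$ for $p\le q$. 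The exponent bookkeeping checks out ($1<q'\le 2\le p'<\infty$ throughout, so all Lorentz spaces involved are legitimate), and the endpoint cases $p=2$ or $q=2$ reduce to Plancherel. This is a complete proof modulo the two classical lemmas you cite.

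One caution about your alternative dyadic sketch: the interpolation giving the uniform bound $\|T_{m_j}\|_{p_0\to q_0}\lesssim 1$ is fine, but ``sum using almost-orthogonality of the Fourier supports'' is not enough as stated --- uniform operator bounds on infinitely many pieces do not sum. To close that route you must insert Littlewood--Paley projections, writing $T_{m_j}f=T_{m_j}\widetilde\Delta_j f$, and use the square-function inequalities $\bigl\|\sum_j g_j\bigr\|_{q_0}\lesssim \bigl(\sum_j\|g_j\|_{q_0}^2\bigr)^{1/2}$ (for $q_0\ge 2$ and finitely overlapping Fourier supports) together with its dual $\bigl(\sum_j\|\widetilde\Delta_j f\|_{p_0}^2\bigr)^{1/2}\lesssim\|f\|_{p_0}$ for $p_0\le 2$. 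Since you present this only as an alternative, it does not affect the validity of your main argument, but as written that step is a genuine gap in the second route.
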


By $J_{\left( \alpha\right)}(s)$, we  denotes  the classical  Bessel function of size $\alpha$. From \cite{thirteen}, we have the following estimate:
 \begin{align}\label{1001}
 \left|t^{-(a+i b)} J_{\left( a+ib\right)}(t)\right| \leq C_{a}~ e^{c|b|}~(1+t)^{-a-\frac{1}{2}}, \quad 0<t<\infty.
 \end{align}
 For detailed study and several properties on    Bessel functions, we refer to   \cite{hopkins3, thirteen}. 

\section{$L^p$-$L^q$-boundedness}\label{sec2}
This section is devoted to study our main results of this paper. The Hardy-Littlewood multiplier theorem plays a  key role in the proof of the theorem.
\subsection{Proof of Theorem \ref{Main}}

\begin{proof}
	For $\alpha>0$, let us consider $$\xi_{\alpha}(y)=\begin{cases}
			(|y|^2-1)^{-\alpha},& \text{if}\; |y|>1,\\
		0,&   \text{if}\; |y|\leq1.
	\end{cases}$$
Then the Fourier transform of $\xi_\alpha$ is not classically defined, as the function $\xi_\alpha$ is not in $L^1(\mathbb{R}^n)$.   We define it by the sense of  tempered distribution.  For $\alpha \neq n, n \in \mathbb{N}$, we define
	$$\widehat{\xi_{\alpha}}(s)=\sqrt{\pi} \;\Gamma(1-\alpha) \frac{|s|}{2}^{\left(\alpha-\frac{n}{2}\right)}\cot\pi\left(\frac{n}{2}-\alpha\right)\left(J_{\left(\alpha-\frac{n}{2}\right)}(|s|)-J_{\left(\frac{n}{2}-\alpha\right)}(|s|)\right)$$
	for $0\leq \alpha<1$, where $J_{\left( \alpha\right)}(s)$ denotes  the classical  Bessel function of size $\alpha$. This is well defined by the analytic continuation of $\Gamma$ almost everywhere. Since  
\begin{align}\label{eq5}
S_{\alpha}f=\xi_{\alpha}*f, \quad f \in \mathcal{S}(\mathbb{R}^n),
\end{align}
	then the Fourier transform of $S_{\alpha}f$ is given by 
	\begin{align}\label{eq1}
	\widehat{S_{\alpha}f}(s)=\sqrt{\pi} \;\Gamma(1-\alpha) \frac{|s|}{2}^{\left(\alpha-\frac{n}{2}\right)}\cot\pi\left(\frac{n}{2}-\alpha\right)\left(J_{\left(\alpha-\frac{n}{2}\right)}(|s|)-J_{\left(\frac{n}{2}-\alpha\right)}(|s|)\right)\hat{f}(s).
		\end{align}
The right-hand side of the above inequality  is a single-valued analytic function of $\alpha$ in the complex plane except at  the positive integers. 

	Let $\frac{n}{2}+\frac{1}{4}<\alpha \leq\frac{n+1}{2} $ and     consider the function
	\begin{align}\label{eq2}
		m(s)=\sqrt{\pi} \;\Gamma(1-\alpha) \;\frac{|s|}{2}^{\left(\alpha-\frac{n}{2}\right)}\cot\pi\left(\frac{n}{2}-\alpha\right)\left(J_{\left(\alpha-\frac{n}{2}\right)}(|s|)-J_{\left(\frac{n}{2}-\alpha\right)}(|s|)\right).
	\end{align}
Using the classical estimate (\ref{1001}) on the size of Bessel functions, we have   
\begin{align}\label{eq3} J_{\left(\alpha-\frac{n}{2}\right)}(|s|) \leq (1+|s|)^{-\alpha +\frac{n}{2}-\frac{1}{2}}	\end{align}
and 
\begin{align}\label{eq4} J_{\left(\frac{n}{2}-\alpha\right)}(|s|) \leq (1+|s|)^{\alpha -\frac{n}{2}-\frac{1}{2}}.	\end{align}
Using the estimates (\ref{eq2}) and (\ref{eq3}), from  the relation  (\ref{eq4}),  we get 
$$|m(s)| \leq c|s|^{-\left( 2\alpha-n-\frac{1}{2}\right)} ,$$
for some costant $c$.  Thus by Theorem \ref{multipliers},    $m$ is a $L^p-L^q$ multiplier provided 
$$1 \leq p \leq 2 \leq q < \infty \quad \text{and}\quad  ~\frac{1}{p}-\frac{1}{q}= \frac{2\alpha-n-\frac{1}{2}}{n}.$$ 	Thus,  the operator $S_\alpha$ is bounded from $L^p$ to $L^q$ for $\frac{1}{p}-\frac{1}{q} \leq  \frac{2\alpha-n-\frac{1}{2}}{n}$. This completes the proof of the theorem. 
\end{proof}

 In the next theorem we show that the convolution operator  $S_{\alpha}$
 is bounded from $L^p$ to $L^{p'}$ in one dimentional. 
 \begin{theorem}\label{main1}
	Let  $0\leq \alpha \leq 1$. Then for $p=\frac{2}{(2-\alpha)} $ and $p^{\prime}=\frac{2}{\alpha}$,  	the   convolution opeartor $S_{\alpha}$
is bounded from $L^p$ to $L^{p'}$.
 \end{theorem}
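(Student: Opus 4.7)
The plan is to deduce the diagonal $L^p \to L^{p'}$ estimate, with $n=1$, from Stein's complex interpolation theorem applied to the analytic family $\{S_z\}$ in the strip $0 \leq \mathrm{Re}(z) \leq 1$, obtained by replacing the real exponent $-\alpha$ in the kernel by a complex parameter $-z$. The pairs $(p_\alpha,p'_\alpha) = (2/(2-\alpha),\, 2/\alpha)$ are exactly the linear interpolants between the endpoints $(1,\infty)$ at $\alpha=0$ and $(2,2)$ at $\alpha=1$, so it is enough to prove an $L^1 \to L^\infty$ bound on the line $\mathrm{Re}(z)=0$ and an $L^2 \to L^2$ bound on the line $\mathrm{Re}(z)=1$, both with at most sub-exponential growth in $|\mathrm{Im}(z)|$.

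The first endpoint comes from the pointwise size of the kernel. For $z=it$ the kernel $\xi_{it}(y) = (y^2-1)^{-it}\mathbf{1}_{\{|y|>1\}}$ has modulus equal to $1$ on $\{|y|>1\}$ and vanishes elsewhere, so Young's inequality gives $\|S_{it}f\|_\infty \leq \|\xi_{it}\|_\infty \|f\|_1 = \|f\|_1$, uniformly in $t$. For the second endpoint, take $z=1+it$ and use the Fourier-multiplier representation derived inside the proof of Theorem \ref{Main}. In dimension one the multiplier $m_{1+it}(s)$ is built out of $(|s|/2)^{1/2+it}$ together with the combination $J_{1/2+it}(|s|) - J_{-1/2-it}(|s|)$, and the weight $|s|^{1/2}$ is exactly cancelled by the $(1+|s|)^{-1/2}$ decay coming from the Bessel estimate (\ref{1001}); this yields $|m_{1+it}(s)| \leq C e^{c|t|}$ uniformly in $s$, hence $\|S_{1+it}f\|_2 \leq C e^{c|t|}\|f\|_2$ by Plancherel.

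With these two endpoint estimates in hand, Stein's interpolation theorem gives, for $z = \alpha \in [0,1]$, the intermediate bound $\|S_\alpha f\|_{p'_\alpha} \leq C_\alpha \|f\|_{p_\alpha}$ with $1/p_\alpha = (1-\alpha)\cdot 1 + \alpha \cdot (1/2) = (2-\alpha)/2$ and $1/p'_\alpha = \alpha/2$, which is exactly the desired conclusion. The main technical obstacle is the correct construction of the analytic family: the Fourier-transform formula for $\widehat{\xi_z}$ carries an apparent pole from $\Gamma(1-z)$ at $z=1$, which has to be compensated by the simple zero of $\cot\pi(1/2-z)$ at the same point (or absorbed into a normalizing analytic factor), after which one must verify the admissibility hypothesis of Stein's theorem, i.e.\ analyticity of $z \mapsto \langle S_z f, g\rangle$ for $f,g\in \mathcal{S}(\mathbb{R})$ in the open strip and the sub-exponential growth of $\|S_z\|$ on its boundary, both of which follow from (\ref{1001}) together with the Schwartz decay of $f$ and $g$.
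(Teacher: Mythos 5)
Your proof follows essentially the same route as the paper's: Stein complex interpolation on the strip $0\le\operatorname{Re}(z)\le1$, with the $L^1\to L^\infty$ endpoint from the unimodularity of the kernel $(y^2-1)^{-it}\mathbf{1}_{\{|y|>1\}}$ and the $L^2\to L^2$ endpoint from Plancherel together with the Bessel-function bound (\ref{1001}) applied to the multiplier. In fact your write-up is more careful than the paper's on the admissibility of the analytic family and on the cancellation between the pole of $\Gamma(1-z)$ and the zero of $\cot\pi(\tfrac12-z)$ at $z=1$, points the paper passes over silently.
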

 \begin{proof}
 Let $S_{\alpha}$ is an family of operators defined in the strip $0 \leq \operatorname{Re}(\alpha) \leq 1$   in the complex plane in the sense of Stein \cite{stein}. 	 When  $\operatorname{Re}(\alpha)=1$, using Plancherel theorem,  (\ref{eq5}), (\ref{eq4}), and (\ref{eq4}), we have 
 	 $$\left\|S_{\alpha} f\right\|_{2} \leq c_1 e^{c \operatorname{Im}(\alpha)}\;\|f\|_{2}.$$
 	  Again on the other side,  when  $\operatorname{Re}(\alpha)=0$, using the fact that $\left|\xi_{\alpha}(y)\right| \leq 1$ and $|\Gamma(1+i b)|=\left(\frac{\pi b}{\sinh b}\right)^{\frac{1}{2}}$, from the relation (\ref{eq1}), we have the estimate 
 	  $$\left\|S_{\alpha} f\right\|_{\infty} \leq c_1 e^{c \operatorname{Im}(\alpha)} \; \|f\|_{1}.$$ 
Then by  Stein's interpolation theorem theorem \cite{stein}, we have the result. 
 \end{proof}
An immediate consequence of the above theorem is the follwing $L^p-L^q$ boundedness of the  convolution opeartor $S_{\alpha} $.
\begin{theorem}
	Let  $\frac{1}{2} \leq \alpha \leq 1$. Then for   $$\frac{2} {(2-\alpha)} \leq p \leq 2\quad \text{and}\quad\frac{1}{q}=\alpha-\frac{1}{p}$$
	 or 
	 $$\frac{1}{\left(\frac{3}{2}-\alpha\right)} \leq p \leq\frac{2} {(2-\alpha)}\quad\text{and}\quad \frac{1}{q}=\alpha-\frac{1}{p^{\prime} },$$
	 	the   convolution opeartor $S_{\alpha} $
	 is bounded from $L^p$ to $L^q$.
\end{theorem}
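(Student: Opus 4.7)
The plan combines the endpoint supplied by Theorem \ref{main1} with a weak-type endpoint coming from the size of the kernel, then uses Marcinkiewicz real interpolation and the self-adjointness of $S_\alpha$ to reach both ranges of the statement.

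Theorem \ref{main1} gives the strong endpoint $S_\alpha\colon L^{2/(2-\alpha)}\to L^{2/\alpha}$, i.e.\ the point $P_0=(1-\alpha/2,\alpha/2)$ in the Riesz diagram; since $P_0$ satisfies $\tfrac1p-\tfrac1q=1-\alpha$, it lies on the line that parametrises the second range of the theorem. Next, I would verify directly that in dimension one the kernel $\xi_\alpha$ belongs to the weak Lebesgue space $L^{1/\alpha,\infty}(\mathbb{R})$: its super-level set $\{\xi_\alpha>t\}$ is the annulus $\{1<|y|<\sqrt{1+t^{-1/\alpha}}\}$, whose measure is comparable to $t^{-1/\alpha}$ as $t\to\infty$ and behaves like $t^{-1/(2\alpha)}$ as $t\to 0^+$, both compatible with membership in $L^{1/\alpha,\infty}$. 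Young's inequality for convolutions with weak-type kernels then produces $S_\alpha\colon L^p\to L^{q,\infty}$ for every admissible $(1/p,1/q)$ on the line $\tfrac1p-\tfrac1q=1-\alpha$, and in particular the weak endpoint $P_1=(3/2-\alpha,1/2)$. Marcinkiewicz interpolation between the strong bound at $P_0$ and the weak bound at $P_1$ upgrades the weak estimate to a strong $L^p$--$L^q$ bound at every interior point of the segment $[P_0,P_1]$, which is precisely the second range of the statement.

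For the first range, I use the fact that $\xi_\alpha$ is even, so $S_\alpha$ is formally self-adjoint and $L^p\to L^q$ boundedness is equivalent to $L^{q'}\to L^{p'}$ boundedness. Dualising the endpoint $P_1$ furnishes the strong endpoint $(1/2,\alpha-1/2)$, which is the right endpoint of the first range, located on the line $\tfrac1p+\tfrac1q=\alpha$. A second strong endpoint on this line is then obtained by applying Theorem \ref{multipliers} to the pointwise estimate on $m(\xi)$ derived from (\ref{1001}) exactly as in the proof of Theorem \ref{Main}; a final Riesz--Thorin interpolation along the segment closes the first range.

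The main obstacle I anticipate is producing this second strong endpoint of the first range uniformly throughout the stated interval $\tfrac12\le\alpha\le 1$. The multiplier bound used in Theorem \ref{Main} decays only when $\alpha>3/4$, so for $\tfrac12\le\alpha\le 3/4$ one must either sharpen the Bessel estimate by exploiting the cancellation in $J_{\alpha-1/2}(|\xi|)-J_{1/2-\alpha}(|\xi|)$ to gain additional decay of $m$, or else run a fresh Stein interpolation on the analytic family $\{S_z\}_{z\in\mathbb{C}}$ with suitably chosen strip boundaries that place the target endpoint inside the open strip.
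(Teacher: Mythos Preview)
Your route diverges from the paper's in a substantive way. The paper's argument is much shorter and splits into two cases: for $\alpha>\tfrac12$ it simply invokes Riesz interpolation between the multiplier bound of Theorem~\ref{Main} and the $L^{2/(2-\alpha)}\to L^{2/\alpha}$ estimate of Theorem~\ref{main1}; the weak-Lebesgue idea enters only at the single endpoint $\alpha=\tfrac12$, where the paper observes that $\xi_{1/2}\in L^{2,\infty}$, deduces an $L^1\to L^{2,\infty}$ bound, and closes with Marcinkiewicz interpolation and duality.

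You instead put the weak-Lebesgue observation at the centre for \emph{every} $\alpha$ in the range: from $\xi_\alpha\in L^{1/\alpha,\infty}$ you recover the whole line $\tfrac1p-\tfrac1q=1-\alpha$ via weak Young and Marcinkiewicz. This is a genuine alternative for the second range and has the merit of being uniform in $\alpha$; it does not lean on Theorem~\ref{Main} and so sidesteps the restriction $\alpha>\tfrac34$ that you correctly flag. In fact Young's inequality for $L^{r,\infty}$ kernels already gives the \emph{strong} bound for $1<p<q<\infty$, so your Marcinkiewicz step is superfluous except at $\alpha=\tfrac12$, which is precisely where the paper also resorts to it.

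For the first range you return to the multiplier estimate behind Theorem~\ref{Main}, so the two arguments coincide in spirit there; the obstacle you identify for $\tfrac12<\alpha\le\tfrac34$ is real, and the paper offers no separate remedy for that sub-range either. One small correction: dualising the weak endpoint $P_1$ does not by itself yield a strong bound at $(1/2,\alpha-1/2)$; you should first upgrade $P_1$ to strong type (weak Young does this once $\alpha>\tfrac12$, since then $p>1$) and only then dualise.
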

 \begin{proof}
 If $\alpha>\frac{1}{2},$	then the result follows  from Theorem \ref{Main} and \ref{main1}   by an application of  Riesz interpolation theorem. Now let us consider  $\alpha=\frac{1}{2}$. Since $\xi_{\frac{1}{2}}$ is in weak $L^{2}$,  then $T_{\frac{1}{2}}$ maps $L^{1}$ to weak $L^{2}$. Then the required result follows by applying the Marcinkiewicz interpolation theorem \cite{zyg} and a duality argument \cite{hor}.
 \end{proof}
%\begin{remark}
% For $0 \leqq \alpha \leqq 1$ we can show that estimate (c) is sharp, in the sense that if $\left\|T_{\alpha} f\right\|_{q} \leqq M\|f\|_{p}$ then $1 / q \geqq(\alpha-1+1 / p) / n .$ For if we choose $f$ to be $\phi_{\beta}$ it is not difficult to show that $\left|T_{\alpha} \phi_{\beta}(x)\right| \geqq c|x|^{-\beta-\alpha+1}$ for $|x| \leqq \frac{1}{2}$.
%\end{remark}
 
 \begin{remark}
 	Note that,    $\widehat{\xi_\alpha}$  (defined in (\ref{ker1})) is  a multiplier in terms of Bessel functions. 
 \end{remark}

\begin{remark}
If we define kernels by replacing $|y|^{2}$ by any nondegenerate quadratic form   using the computations of [\cite{parker13}, Chapter $\mathrm{III}]$, then all the results are also in this case. %it is    possible to prove similar results for kernels obtained by r
\end{remark}

\section*{Acknowledgments}
The first author gratefully acknowledges the support provided by  Harish-Chandra Research Institute, Government of India.   Second author thanks IIT Guwahati, India, for the support provided during the period of this work. The  authors  are  deeply indebted to Prof. P. K. Ratnakumar for several fruitful discussions and generous comments.

%\section{Data availability statement}	The authors confirm that the data supporting the findings of this study are available within the article  and its supplementary materials.

\end{document}